\newtheorem{theorem}{Theorem}
\newtheorem{lemma}{Lemma}
\newtheorem{corollary}{Corollary}
\begin{document}
\baselineskip=17pt
\title[ Two analogs of Thue-Morse sequence]
{Two analogs of Thue-Morse sequence}

\author{Vladimir Shevelev}
\address{Department of Mathematics \\Ben-Gurion University of the
 Negev\\Beer-Sheva 84105, Israel. e-mail:shevelev@bgu.ac.il}

\subjclass{11B83}

\begin{abstract}
We introduce and study two analogs of one of the best known sequence in Mathematics
: Thue-Morse sequence. The first analog is concerned with the parity of number of runs of 1's
in the binary representation of nonnegative integers. The second one is connected with the parity of number of 1's in the representation of nonnegative integers in so-called negabinary (or in base $-2).$ We give for them some recurrent and structure formulas and prove that the second $(0,1)$-sequence is cube-free, while the first one is quint-free. Finally we consider several interesting unsolved problems.
\end{abstract}

\maketitle

\section{Introduction}

Let $T=\{t_n\}|_{n\geq0}$ be Thue-Morse sequence (or it is called also Prouhet-Thue
-Morse sequence, Allouche and Shallit [4]). It is defined as the parity of number
of $1's$ in binary representation of $n.$ $T$ is the sequence A010060 \cite{10}.
There are well known the following formulas for $T$ (\cite{4}):\newline
i) (A recurrent formula). $t_0=0,$ for $n>0,\enskip t_{2n}=t_n$ and
$t_{2n+1}=1-t_n.$\newline
ii) (Structure formula). Let $A_k$ denote the first $2^k$ terms; then $A_0 = 0$
and for $A_{k+1}, \enskip k >= 0,$ we have a concatenation  $A_{k+1} = A_k B_k,$ where $B_k$ is
obtained from $A_k$ by interchanging $0's$ and $1's;$\newline
iii) (A relation which is equivalent to ii)). For $0 <= k < 2^m, \enskip t_{2^m+k} =
1 - t_k;\newline$

Some much more general formulas one can find in author's article \cite{8}.
In this paper we introduce and study two analogs of $T:$\newline
1) Let $R=\{r_n\}|_{n\geq0}$ be the parity of number of runs of
 $1's$ in the binary representation of $n.$ $R$ is author's sequence A268411 in \cite{10}.
\newline
2) Let $G=\{g_n\}|_{n\geq0}$ be the parity of number of $1's$ in the negabinary
representation (or in base $-2$ of $n.$ (cf. \cite{11} and sequence A039724
\cite{10}; see also \cite[p.101]{6}, \cite[p.189]{7})).
$G$ is author's sequence A269027 in \cite{10}.\newline
\indent We say several words concerning the appearance of the sequences $G$ and $R.$
For the first time, the numerical base $-2$ was introduced by $V. Gr\ddot{u}nwald$
in 1885 (see talk in \cite{12} and references there). The author was surprised that during
130 years, nobody considered a natural analog of Thue-Morse sequence in the base $-2,$
and he decided to study this sequence. Unexpectedly, it turned out that
it has very interesting properties (although less canonical than Thue-Morse sequence).
Moreover, it definitely has astonishing (yet unproved) joint properties with Thue-Morse
sequence (see below our problems $C),D)).$ \newline
\indent Concerning sequence $R,$ it first appears in some-what exotic way. Let $u(n)$ be
characteristic $(0,1)$ function of a sequence $S=\{1=s_1<s_2<s_3<...\}.$ In the main
result of our recent paper \cite{9} there appears the following series
$f(x)=\sum_{i\geq2}(u(i)-u(i-1))x^i.$ It is easy to see that if to ignore zero
coefficients (when $u(i)=u(i-1)$), then other coefficients form alternative $(-1,1)$
sequence. The author decided to introduce a numerical system based on base $2$ with such an order
of digits. So he considered the "balanced binary" representation of $n$ which is
obtained from the binary representation of $n$ by replacing every $2^j$ by
$2^{j+1}-2^j.$ The system was named by the author "balanced" since the digital sum of every $n$
in this system equals 0. For example $7 = 4+2+1 =(8-4)+(4-2)+(2-1) = 8-1 =(1,0,0,-1)_b.$
The natural question: "how many pairs $1,-1$ are contained in the balanced binary
representation of $n?"$ is easily answered: this number equals the number of runs
of $1's$ in the binary representation of $n.$ This sequence modulo 2 is $R=A268411.$

\section{Main results}
In this paper we prove the following.
\begin{theorem}\label{t1}
The following recursion holds. $r_0=0, r_{2n}=r_n;$ for even $n,$ $r_{2n+1}=1-r_n;$
for odd $n,$  $r_{2n+1}=r_n.$
\end{theorem}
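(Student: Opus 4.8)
The plan is to track how the number of runs of $1$'s in the binary expansion behaves under the two elementary operations $n \mapsto 2n$ and $n \mapsto 2n+1$, since every positive integer is obtained from a smaller one by exactly one of these. Write $\rho(n)$ for the number of maximal runs of $1$'s in the binary representation of $n$, so that $r_n \equiv \rho(n) \pmod 2$. The base case $r_0 = 0$ is immediate, as $0$ contains no $1$'s and hence $\rho(0) = 0$.

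For the rule $r_{2n} = r_n$, I would observe that the binary representation of $2n$ is obtained from that of $n$ by appending a single trailing $0$. A trailing $0$ cannot start a new run, nor can it merge or split any existing run of $1$'s; consequently $\rho(2n) = \rho(n)$, and reducing modulo $2$ gives $r_{2n} = r_n$.

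The two remaining cases both concern $n \mapsto 2n+1$, whose binary representation is that of $n$ with a trailing $1$ appended; the effect depends on the last binary digit of $n$, that is, on the parity of $n$. If $n$ is even, its last digit is $0$, so the appended $1$ is preceded by a $0$ and forms a new, isolated run; thus $\rho(2n+1) = \rho(n) + 1$ and $r_{2n+1} = 1 - r_n$. If instead $n$ is odd, its last digit is $1$, so the appended $1$ merely lengthens the already-present trailing run without creating a new one; thus $\rho(2n+1) = \rho(n)$ and $r_{2n+1} = r_n$. Combining these with the rule for $n \mapsto 2n$ yields all four assertions.

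The argument is entirely elementary, so there is no serious obstacle; the only point demanding care is the case split on the parity of $n$ in the $2n+1$ rule, where one must argue correctly that appending a $1$ either opens a fresh run (when the preceding digit is $0$) or extends an existing one (when the preceding digit is $1$). One should also check the boundary instance $n = 0$: it is even, so the formula predicts $r_1 = 1 - r_0 = 1$, agreeing with $1 = (1)_2$ having a single run.
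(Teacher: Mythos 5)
Your proof is correct. The underlying observation is the same one the paper uses --- passing from $n$ to $2n$ appends a trailing $0$ (leaving the run count unchanged), and passing to $2n+1$ appends a trailing $1$, which opens a new run precisely when the last bit of $n$ is $0$ --- but your organization of the odd case is genuinely cleaner. The paper handles odd $n$ by writing $n-1=2^m l$ with $l$ odd, splitting on $m=1$ versus $m\geq 2$, and reducing through a chain of recursive identities (including a step $r_{2l}=r_{2l+1}$ that is itself an instance of the claim being proved and so implicitly relies on strong induction on smaller indices). You avoid all of that by working directly with the integer-valued run-count $\rho$ and noting that appending a $1$ to an odd $n$ simply lengthens the existing trailing run, so $\rho(2n+1)=\rho(n)$ outright; no induction or case split on the $2$-adic structure of $n-1$ is needed. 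What the direct argument buys is transparency and the stronger exact statement about $\rho$ itself (not just its parity); what the paper's formulation buys is essentially nothing here, so your version is preferable. Your explicit check of the boundary case $n=0$ is a sensible addition the paper omits.
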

A useful corollary: $r_{4n}=r_n,$ $r_{4n+1} = 1 - r_n,$ $r_{4n+2} = r_{2n+1},$ $r_{4n+3} = r_{2n+1}.$
\begin{theorem}\label{t2}
Let $R_k$ denote the first $2^k$ terms of $R;$ then $R_1 = \{0,1\}$ and for $k >= 1,$
we have a concatenation  $R_{k+1} = R_k S_k,$ where $S_k$ is obtained from $R_k$
by complementing the first $2^{k-1}$  $0's$ and $1's$ and leaving the rest unchanged.
\end{theorem}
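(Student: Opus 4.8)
The plan is to establish the structure formula directly from the definition of $r_n$ as the parity of the number of runs of $1$'s in the binary expansion of $n$, by tracking how prepending a single leading bit changes the run count. The starting observation is that for $0 \le j < 2^k$ the binary representation of $2^k + j$ is a $1$ in position $k$ followed by the $k$-digit binary representation of $j$ (padded with leading zeros), so the whole question reduces to whether that new leading $1$ starts a fresh run or merges with an existing one.

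First I would split the range $0 \le j < 2^k$ according to the bit of $j$ in position $k-1$, which is exactly the bit that sits immediately below the new leading $1$. If $2^{k-1} \le j < 2^k$ then this bit is $1$, the new leading $1$ is adjacent to the top run of $j$ and merges into it, so the number of runs of $1$'s in $2^k+j$ equals that in $j$ and hence $r_{2^k+j} = r_j$. If $0 \le j < 2^{k-1}$ then the bit in position $k-1$ is $0$, the new leading $1$ is separated from the rest of the word by at least one $0$ and therefore constitutes a brand-new run, so the run count increases by exactly one and $r_{2^k+j} = 1 - r_j$.

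Next I would read these two cases back into the language of blocks. Since $R_{k+1}$ is the concatenation of $R_k = (r_0,\ldots,r_{2^k-1})$ with the block $S_k = (r_{2^k},\ldots,r_{2^{k+1}-1})$, the dichotomy above says precisely that the entry of $S_k$ in position $j$ equals $1-r_j$ for $0 \le j < 2^{k-1}$ and equals $r_j$ for $2^{k-1} \le j < 2^k$; that is, $S_k$ is $R_k$ with its first $2^{k-1}$ entries complemented and its last $2^{k-1}$ entries left unchanged, which is the assertion of Theorem \ref{t2}. The base case $R_1 = \{0,1\}$ is immediate from $r_0 = 0$ and $r_1 = 1$.

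The argument is a single clean combinatorial dichotomy, so I do not anticipate a serious obstacle; the one point demanding care is the run-merging analysis at the interface between positions $k-1$ and $k$, namely verifying that adjacency of the new leading $1$ to the top bit of $j$ is governed exactly by the threshold $j \ge 2^{k-1}$, with no off-by-one error and no hidden interaction with runs lower down in $j$. One could instead obtain the same conclusion by induction from the recursion of Theorem \ref{t1}, but the direct run-counting argument is more transparent and, as a bonus, explains structurally why the two halves of $R_k$ are treated differently.
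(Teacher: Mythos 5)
Your proposal is correct and is essentially the paper's own argument: the paper likewise reduces the theorem to the formula $r_{n+2^k}=1-r_n$ for $0\le n\le 2^{k-1}-1$ and $r_{n+2^k}=r_n$ for $2^{k-1}\le n\le 2^k-1$, and proves it by exactly your dichotomy of whether the new leading $1$ merges with the top run of $n$ (which happens precisely when the bit in position $k-1$ is set, i.e.\ $n\ge 2^{k-1}$) or starts a fresh run. No substantive difference.
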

\begin{theorem}\label{t3}
The following recursion holds. $g_0=0, g_{4n}=g_n,$ $g_{4n+1} = 1 - g_n,$
$g_{4n+2} = 1 - g_{n+1},$ $g_{4n+3} = g_{n+1}.$
\end{theorem}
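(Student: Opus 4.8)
The plan is to work directly with the base $-2$ digit expansion of $n$ and track how it transforms under $n \mapsto 4n + r$ for $r \in \{0,1,2,3\}$. The key structural fact is that $4 = (-2)^2$, so multiplying by $4$ shifts every negabinary digit two places to the left, i.e. it appends two zero digits in positions $0$ and $1$ without disturbing any existing digit or triggering a carry. Since every nonnegative integer has a \emph{unique} representation $\sum_k d_k (-2)^k$ with $d_k \in \{0,1\}$, it suffices to exhibit, for each residue $r$, a valid $(0,1)$-string whose value is $4n+r$; uniqueness then guarantees that this string is the actual negabinary representation, and the parity $g_{4n+r}$ is read off by counting its $1$'s modulo $2$.

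First I would dispose of the two easy residues. For $r = 0$ the representation of $4n$ is that of $n$ with two zeros appended, so the number of $1$'s is unchanged and $g_{4n} = g_n$. For $r = 1$ I write $4n + 1 = (-2)^2 n + (-2)^0$, so the representation of $4n+1$ is that of $n$ followed by the digits $0,1$; this adds exactly one $1$, giving $g_{4n+1} = 1 - g_n$.

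The remaining two residues are handled by the identities $4n+2 = 4(n+1) - 2$ and $4n+3 = 4(n+1) - 1$, together with the negabinary values $-2 = (-2)^1$ and $-1 = (-2)^1 + (-2)^0$. Thus $4n+2 = (-2)^2(n+1) + (-2)^1$ has representation equal to that of $n+1$ followed by $1,0$ (one extra $1$), whence $g_{4n+2} = 1 - g_{n+1}$; and $4n+3 = (-2)^2(n+1) + (-2)^1 + (-2)^0$ has representation equal to that of $n+1$ followed by $1,1$ (two extra $1$'s), whence $g_{4n+3} = g_{n+1}$. The base case $g_0 = 0$ is immediate since $0$ has no $1$'s.

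The only genuine subtlety --- the step I would be most careful about --- is verifying that appending the low-order digits to the shifted expansion of $n$ (or of $n+1$) never forces a carry into the higher positions, so that the concatenated string is a legal $(0,1)$ negabinary word. This is exactly where it matters that the two lowest positions are vacated by the multiplication by $4$ and that the added quantities $0,1,-2,-1$ live entirely within positions $0$ and $1$; once this is observed, uniqueness of the base $-2$ expansion closes each case with no further computation.
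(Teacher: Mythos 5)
Your proof is correct, and it takes a genuinely different and substantially shorter route than the paper's. The paper never works natively in base $-2$: it starts from the \emph{binary} expansion of $n$, tracks how the binary-to-negabinary conversion behaves on the tail of the expansion, and splits the hard case $g_{4n+3}=g_{n+1}$ into subcases according to whether $n$ ends in an even or odd number of binary $1$'s, supported by two auxiliary lemmas computing the negabinary forms of $2^m-1$ and of $2^{m+k+1}-2^m-1$. You instead exploit uniqueness of the $(0,1)$-digit representation in base $-2$ together with the identities $4n+2=4(n+1)+(-2)^1$ and $4n+3=4(n+1)+(-2)^1+(-2)^0$: since multiplication by $4=(-2)^2$ vacates the two lowest digit positions and the added quantities $0,1,-2,-1$ occupy only those positions, the negabinary word for $4n+r$ is literally the word for $n$ (or $n+1$) with two fixed digits appended, and the digit-sum parities follow with no case analysis. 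What your approach buys is brevity and transparency --- it isolates the one idea (shift by two positions plus uniqueness) that makes the recursion work, and it explains \emph{why} the recursion is naturally indexed by $n$ and $n+1$. What the paper's approach buys is the explicit binary-to-negabinary conversion machinery (Lemmas 1 and 2 and the Corollary on $g_{2^m-1}$), which it reuses later; your argument does not produce those byproducts, but as a proof of Theorem 3 alone it is complete and preferable.
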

For the first time this statement was formulated by R. Israel in our sequence
$G=A269027$ \cite{10}.
\begin{theorem}\label{t4}
Let $G_k$ denote the first $2^k$ terms of $G;$ then $G_0={0}$ and for even
$k>=0,$ we have a concatenation $G_{k+1}= G_kF_k,$ where $F_k$ is obtained from $G_k$
 by complementing its $0's$ and $1's;$ for odd $k>=1,$ we have a concatenation
 $G_{k+1}= G_kH_k,$ where $H_k$ is  obtained from $G_k$ by complementing 
 \newpage
 its last
  $(2/3)(2^{k-1}-1)$ $0's$ and $1's.$
\end{theorem}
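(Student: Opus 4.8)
The plan is to prove both statements simultaneously by induction on $k$, comparing the second half of $G_{k+1}$ with the first half $G_k$ and reducing everything to Theorem~\ref{t3}. For $0 \le j < 2^k$ set $\delta_k(j)=1$ if $g_{2^k+j}\neq g_j$ and $\delta_k(j)=0$ otherwise; thus the second block of $G_{k+1}$ is obtained from $G_k$ by complementing exactly those entries $j$ with $\delta_k(j)=1$. In this language the theorem asserts that $\delta_k\equiv 1$ for even $k$, while for odd $k$ the set $\{j:\delta_k(j)=1\}$ is precisely the top interval $[\,2^k-c_k,\,2^k)$ with $c_k=\tfrac{2}{3}(2^{k-1}-1)$.

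First I would derive a reduction of $\delta_k$ to $\delta_{k-2}$. For $k\ge 2$ one has $2^k+j\equiv j \pmod 4$, so writing $j=4m+r$ and applying Theorem~\ref{t3} to both $g_{2^k+j}=g_{4(2^{k-2}+m)+r}$ and $g_j=g_{4m+r}$ collapses each pair to a comparison of $g$ at the shifted arguments. A short case check on $r$ gives $\delta_k(4m)=\delta_k(4m+1)=\delta_{k-2}(m)$ and $\delta_k(4m+2)=\delta_k(4m+3)=\delta_{k-2}(m+1)$, valid whenever the arguments stay in range. The only arguments that leave the range $[0,2^{k-2})$ are the two top indices $j=2^k-2,\,2^k-1$, where the comparison becomes $g_{2^{k-1}}$ versus $g_{2^{k-2}}$.

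To settle this boundary I would first record the anchor values $g_{2^k}$, which the recursion $g_{2^k}=g_{2^{k-2}}$ (with $g_1=1$, $g_2=0$) evaluates to $1$ for even $k$ and $0$ for odd $k$. Hence $g_{2^{k-1}}\neq g_{2^{k-2}}$ for every $k\ge 2$ regardless of parity, so $\delta_k(2^k-2)=\delta_k(2^k-1)=1$ in all cases. With the reduction and this boundary value in hand, the induction is routine: for even $k$ the hypothesis $\delta_{k-2}\equiv 1$ forces every value $\delta_{k-2}(m),\delta_{k-2}(m+1)$ used above to equal $1$, giving $\delta_k\equiv 1$; for odd $k$ the hypothesis that $\delta_{k-2}$ is the indicator of $[\,2^{k-2}-c_{k-2},\,2^{k-2})$ yields, after tracking the shift by one in the $r\in\{2,3\}$ case, that $\{j:\delta_k(j)=1\}=[\,4(2^{k-2}-c_{k-2})-2,\,2^k)$. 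This is again a top interval, and reading off its length gives the recurrence $c_k=4c_{k-2}+2$, $c_1=0$, whose solution is $c_k=\tfrac{2}{3}(2^{k-1}-1)$, matching the claim; the base cases $k=0,1$ are checked by hand.

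The main obstacle I anticipate is the bookkeeping around the shift by one in the $r\in\{2,3\}$ branch together with the top boundary: it is exactly this shift that makes the difference region start at $4(2^{k-2}-c_{k-2})-2$ rather than at a multiple of $4$, and hence produces the extra $+2$ in $c_k=4c_{k-2}+2$ and the slightly unexpected factor $\tfrac{2}{3}$. Care is needed to confirm that the interval produced for odd $k$ really is an unbroken suffix, so that ``complementing the last $c_k$ terms'' is literally correct rather than a union of shorter blocks, and that the boundary index is counted once and consistently with the two recursive branches.
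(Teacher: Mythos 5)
Your proposal is correct and follows essentially the same route as the paper: both compare $g_{2^k+j}$ with $g_j$ by splitting $j$ into residues mod $4$, applying the recurrences of Theorem~\ref{t3} to drop from level $k$ to level $k-2$, and inducting in steps of two. Your write-up is if anything more complete than the paper's, since you explicitly handle the boundary indices $j=2^k-2,\,2^k-1$ via the values $g_{2^k}$ and derive the cut-point recurrence $c_k=4c_{k-2}+2$ with $c_k=\tfrac{2}{3}(2^{k-1}-1)$, whereas the paper only verifies base cases for the odd-$k$ branches and asserts the remaining verification is ``the same.''
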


Recall that a sequence is cube-free if it contains no any subsequence of the form $XXX.$
For example, as is well-known, Thue-Morse sequence is cube-free.

\begin{theorem}\label{t5}
The sequence $G$ is cube-free.
\end{theorem}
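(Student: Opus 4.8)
The plan is to translate the combinatorial notion of a cube into a periodicity statement and then exploit the base-$4$ recursion of Theorem \ref{t3} through a three-way case split on the length of the period. Suppose, for contradiction, that $G$ contains a cube $XXX$; writing $p=|X|\ge 1$ and letting $n$ be the starting position, this is precisely the assertion that
\[
g_i=g_{i+p}\qquad\text{for all } i\in[n,n+2p).
\]
Among all such cubes I would fix one with $p$ minimal. The first, preparatory, step is to record two elementary consequences of Theorem \ref{t3} about the \emph{difference sequence}, where I set $d_m=1$ if $g_m\ne g_{m+1}$ and $d_m=0$ otherwise. A one-line computation with $g_{4n}=g_n$, $g_{4n+1}=1-g_n$, $g_{4n+2}=1-g_{n+1}$, $g_{4n+3}=g_{n+1}$ yields (F1) $d_{2m}=1$ for every $m$ (so $G$ has no factor $000$ or $111$, and in particular no cube of period $1$), and (F2) $d_{4n+3}=0$ for every $n$ (so $G$ cannot contain a long alternating block).

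Next I would run the case analysis according to $p\bmod 4$. If $p$ is odd, the shift by $p$ reverses parity, so the period relation lets me transport the values $d=1$ guaranteed by (F1) at even indices onto the odd indices of the window; concretely, for an odd index $i$ in the window one of $i\pm p$ is an even index carrying $d=1$, and $d_i=d_{i\pm p}$. Hence $d_i=1$ throughout $[n,n+2p)$, i.e. the block is purely alternating. But alternation means $g_{n+p}=1-g_n$ because $p$ is odd, whereas periodicity demands $g_{n+p}=g_n$; this contradiction disposes of all odd $p$ at once. If $p\equiv 2\pmod 4$, I substitute $p=4P+2$ into the period relation at indices $i\equiv 0$ and $i\equiv 2\pmod 4$ and reduce via Theorem \ref{t3}; the two resulting identities combine to give $g_k=g_{k+1}$ for a whole run of consecutive $k$ (of length about $p/2$), which forces $d_k=0$ at two consecutive indices, one of them even, contradicting (F1). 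The only value escaping this count is $p=2$, which I would dispatch separately: a period-$2$ cube is either a period-$1$ cube or an alternating factor of length $6$, and the latter requires five consecutive indices with $d=1$, impossible by (F2) since any five consecutive integers contain one congruent to $3\pmod 4$.

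The remaining case $4\mid p$ is where the genuine descent occurs and is the step I expect to demand the most care. Writing $p=4P$ and reducing the period relation through Theorem \ref{t3} at each residue $r\in\{0,1,2,3\}$, every instance collapses to $g_k=g_{k+P}$ (the residues $2,3$ producing the shifted index $k+1$), and assembling these over the window yields $g_j=g_{j+P}$ on a contiguous interval of $j$ of length at least $2P$ --- that is, a cube of period $P=p/4<p$, contradicting minimality. The main obstacle is the bookkeeping at the two ends of the window: I must verify that the descended relations indeed cover a contiguous block of indices long enough to constitute a bona fide cube (length $3P$), rather than merely a short periodic stretch, and check that the base of the descent behaves correctly (for $p=4$ the descended object is a period-$1$ cube, already excluded by (F1), and for $p=8$ a period-$2$ cube, excluded above). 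Once these boundary verifications are in place, the three cases are mutually exhaustive and each ends in a contradiction, completing the proof.
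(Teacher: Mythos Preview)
Your proposal is correct and follows the same overall architecture as the paper's proof: split according to the residue of the period modulo $4$, use the recursion of Theorem~\ref{t3} to derive contradictions when $4\nmid p$, and descend by a factor of $4$ when $4\mid p$, invoking minimality of $p$. The descent step and the $p\equiv 2\pmod 4$ case are essentially what the paper does, and your boundary bookkeeping for the descent (checking that the reduced relations $g_j=g_{j+P}$ cover a contiguous interval of length $\ge 2P$) goes through in every residue class of $n$.

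Where your plan diverges from the paper is in the packaging and, in particular, in the odd-period case. The paper treats $s=1,\dots,6$ individually and then handles $s=4m+1$ and $s=4m+3$ by comparing two specially chosen positions in consecutive copies of $X$; it also assumes the cube starts at an index $\equiv 0\pmod 4$ and dismisses the other residues with ``considered in the same way.'' Your difference-sequence device with (F1) $d_{2m}=1$ and (F2) $d_{4n+3}=0$ replaces all of this by a single alternation argument valid for every odd $p$ and for arbitrary starting position, and cleanly disposes of $p=1,2$ as well. So the two proofs share the same skeleton (Theorem~\ref{t3} plus descent), but yours is the more streamlined execution. One small point to make explicit when you write it out: in the $p=4P+2$ case you should note that $P\ge 1$ guarantees at least two consecutive admissible values of $j$ (the interval of valid $4j$ has length $8P+2\ge 10$), so that $d_j=0$ is forced at an even index and (F1) is violated.
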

\begin{theorem}\label{t6}
The sequence $R$ is quint-free, that is it contains no any subsequence of the form $XXXXX.$
\end{theorem}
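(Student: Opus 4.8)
The plan is to argue by contradiction through an infinite-descent reduction on the period, isolating an unavoidable local obstruction when the period is odd. Suppose $R$ contains a fifth power, i.e. there are $p\ge 0$ and $\ell\ge 1$ with $r_{p+j}=r_{p+j+\ell}$ for all $0\le j<4\ell$ (equivalently, the factor $r_p r_{p+1}\cdots r_{p+5\ell-1}$ has period $\ell$). The engine of the whole argument will be the two congruence facts read off from the corollary to Theorem~\ref{t1}: since $r_{4n}=r_n$ and $r_{4n+1}=1-r_n$ we always have $r_{4n}\ne r_{4n+1}$, while $r_{4n+2}=r_{4n+3}=r_{2n+1}$ gives $r_{4n+2}=r_{4n+3}$. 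Writing $d_m=r_m\oplus r_{m+1}$ for the adjacent-difference sequence, these say $d_{4n}=1$ and $d_{4n+2}=0$ for every $n$.

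First I would dispose of the case $\ell$ even by descent. Since $\ell$ is even, the shift by $\ell$ preserves the parity of the index, so the even-indexed entries of the window form a contiguous factor of $R$ again (using $r_{2n}=r_n$), of length $5\ell/2$ and period $\ell/2$; this is a fifth power of strictly smaller period $\ell/2\ge 1$. Hence, writing $\ell=2^{a}\ell'$ with $\ell'$ odd and iterating this reduction $a$ times, it suffices to derive a contradiction from a fifth power whose period $\ell'$ is odd.

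The heart of the proof is the odd case, where the mod-$4$ structure is used. When $\ell'$ is odd one has $2\ell'\equiv 2\pmod 4$, which lets me line up an index forced to satisfy $d=1$ with one forced to satisfy $d=0$. Concretely, $d$ inherits period $\ell'$ on the range $[p,\,p+3\ell'-2]$ (the range on which the two-step relation $d_{m}=d_{m+2\ell'}$ is valid). For $\ell'\ge 3$ this range has length $3\ell'-1\ge 4$, so it contains a multiple of $4$; I would pick such an $m_1\equiv 0\pmod 4$ and set $m_2=m_1+2\ell'$. Then $m_2\equiv m_1+2\equiv 2\pmod 4$, so $d_{m_1}=1$ while $d_{m_2}=0$; but $m_2\equiv m_1\pmod{\ell'}$ with both indices in the domain of periodicity, forcing $d_{m_1}=d_{m_2}$ --- a contradiction. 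The residual value $\ell'=1$ (a run $a^5$) is immediate: among the five equal positions lies an adjacent pair $(4n,4n+1)$, and $r_{4n}\ne r_{4n+1}$ forbids it.

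I expect the only genuine obstacle to be the bookkeeping in the odd case: checking that the chosen $m_1$ and $m_2=m_1+2\ell'$ both fall inside the range where $d$ is actually $\ell'$-periodic, and confirming the counting that guarantees a multiple of $4$ in the required subinterval (with $\ell'=1$ peeled off separately). The conceptual content --- even periods descend, and odd periods are killed by the single parity identity $2\ell'\equiv 2\pmod 4$ together with $d_{4n}=1$ and $d_{4n+2}=0$ --- is short, and everything else is routine index arithmetic. It is worth noting that the same mechanism explains why one cannot do better than quint-free: runs such as $1111$ and $0000$ occur, so fourth powers are present, and the mod-$4$ obstruction only bites once the repeated block spans enough positions to force a $(4n,4n+1)$-type clash.
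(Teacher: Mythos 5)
Your proposal is correct, and it takes a genuinely different and more economical route than the paper's. Both arguments ultimately run on the same fuel, namely the corollary of Theorem~\ref{t1}: $r_{4n}\ne r_{4n+1}$ and $r_{4n+2}=r_{4n+3}$. But the paper fixes the repeated block to start at a position $4k$, splits into the small cases $s=1,\dots,6$ and then the residues $s\equiv 1,2,3\pmod 4$ (each with sub-cases on the parity of $k$), handles $s\equiv 0\pmod 4$ by a separate induction, and dismisses the starting positions $4k+1,4k+2,4k+3$ with ``considered in the same way.'' You instead (i) reformulate a fifth power as a length-$5\ell$ factor of period $\ell$ starting at an arbitrary position $p$, (ii) halve even periods by restricting to even indices and using $r_{2n}=r_n$ (a cleaner descent than the paper's division by $4$, and one that needs no case split on $p$), and (iii) kill every odd period at once by observing that the difference sequence $d_m=r_m\oplus r_{m+1}$ satisfies $d_{4n}=1$, $d_{4n+2}=0$, inherits period $\ell'$ on a window long enough to contain a multiple of $4$, and $2\ell'\equiv 2\pmod 4$ forces $d_{m_1}=d_{m_1+2\ell'}$ to equate a $1$ with a $0$. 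The index bookkeeping you flag as the remaining work checks out: $d_m=d_{m+2\ell'}$ holds for $m\in[p,p+3\ell'-2]$, an interval of $3\ell'-1\ge 8$ integers when $\ell'\ge3$, and the case $\ell'=1$ is settled by finding a pair $(4n,4n+1)$ among five consecutive indices. What your approach buys is uniformity (no dependence on $p\bmod 4$, no long enumeration, no appeal to symmetry left to the reader) and a transparent explanation of why the bound is quint rather than cube or quart; what the paper's buys is that each small case is verified very explicitly from the recursion without introducing the auxiliary difference sequence.
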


\section{Proof of Theorem 1}
\begin{proof}
1)\enskip Trivially, $r_{2n}=r_n.$\newline
2) Let $n=2k, \enskip 2n+1=4k+1$ which ends on $00...01,$ where the number of zeros
 $\geq1,$ then the last 1 forms a new run of 1's. So, $r_{2n+1}=1-r_{4k}=1-r_{n}.$
 \newline
3) Let $n$ be odd such that $n-1=2^ml,$ where $l$ is odd, $m\geq1$ and
$2n+1=2^{m+1}l+3.$\newline
3a) Let $m=1.$ Then $4l$ ends on two zeros and the adding of 3 does not form a new run.
So, $r_{2n+1}=r_{4l+3}=r_{4l}=r_{2l}=r_{2l+1}=r_n.$\newline
3b) Let $m\geq2.$ Then $2^{m+1}l$ ends on $\geq3$ zeros and the adding of 3 forms a
new run. So, $r_{2n+1}=1-r_{2^{m+1}l}=1-r_{2^ml}=1-(1-r_{n})=r_n.$
\end{proof}
\section{Proof of Theorem 2}
\begin{proof}
It is easy to see that Theorem \ref{t2} is equivalent to the formula
\begin{equation}\label{1}
r_{n+2^k}=\begin{cases} 1-r_{n}, &\text{ $0\leq n\leq2^{k-1}-1$}\\r_n, &\text{
$2^{k-1}\leq n\leq2^k-1.$}\end{cases}
\end{equation}
In case when $n\in[2^{k-1},2^k-1]$ in the binary expansion of $n$
the maximal weight of 1 is $2^{k-1}.$ After addition of $2^k$ this new 1 continues
the previous run of 1's in which there is 1 of the weight $2^{k-1}.$ So, in this
case the number of runs of 1's does not change and $r_{n+2^k}=r_n.$
In opposite case when $n\in [0,2^{k-1}-1]$ after addition of $2^k$ this new 1 forms
a new run and the number of runs is increased on one, so $r_{n+2^k}=1-r_n.$
\end{proof}

\section{Proof of Theorem 3}
 In binary expansion of $n,$ we call \slshape even 1's \upshape the 1's with the weight
$2^{2k},\enskip k\geq 0,$ and other 1's we call \slshape odd 1's\upshape. In conversion from base 2 to base -2 an important role plays the parity of 1's in binary, since  only every odd
 1 with weight $2^{2k+1},\enskip k>=0,$ we should change by two 1's with weights
 $2^{2k+2},\enskip 2^{2k+1},$ which corresponds to the equality
 \newpage
 $$2^{2k+1}=(-2)^{2k+2}+(-2)^{2k+1}.$$
 For example $7=2^2+2+1=>2^2+2^2-2+1=2^3-2+1=2^4-2^3-2+1=11011_{-2}.$
\begin{proof}
1)Since multiplication $n$ by 4 does not change the parity of 1's, then, evidently,
 $g_{4n}=g_{n}.$\newline
2) Again evidently $g_{4n+1}=1-g_{4n}=1-g_n.$\newline
3) Note also that $g_{2n+1}=1-g_{2n}.$ Hence, $g_{4n+3}=g_{2(2n+1)+1}=1-g_{4n+2}.$
4) It is left to prove that $g_{4n+3}=g_{n+1}$ (then $g_{4n+2}=1-g_{n+1}).$
\newline
4a) Let $n$ be even $=2m.$ We should prove that $g_{8m+3}=g_{2m+1}.$ Note that $8m+3$
ends in the binary on $100...011,$ where the number of zeros$\geq1.$ Since $011$ in binary converts to $111_{-2},$ then $g_{8m+3}=1-g_{8m}=1-g_{2m}=1-(1-g_{2m+1})=g_{2m+1}.$\newline
4b) Let $n$ be odd and ends on even number of 1's. We need lemma.
\begin{lemma}\label{L1}
For $m\geq2,$
\begin{equation}\label{2}
2^m-1=\begin{cases}\enskip100...011_{-2},&\text{$m$ is even}\\1100...011_{-2}, &\text{$m$
is odd,}\end{cases}
\end{equation}
where in the 0's run we have $m-2$ zeros.

\end{lemma}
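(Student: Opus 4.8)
The plan is to verify the two proposed negabinary strings directly, relying on the standard fact that every nonnegative integer has a \emph{unique} base $-2$ representation with digits in $\{0,1\}$. Since each proposed string has all digits in $\{0,1\}$ and a nonzero leading digit, it will be enough to check that its value as a base $-2$ numeral equals $2^m-1$; uniqueness then forces it to be the negabinary representation of $2^m-1$.

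First I would decode each string positionally, using $(-2)^j=2^j$ for even $j$ and $(-2)^j=-2^j$ for odd $j$. For even $m$ the string $1\,\underbrace{0\cdots0}_{m-2}\,11$ occupies positions $0,1,\dots,m$, with $1$'s exactly in positions $m$, $1$, and $0$; its value is therefore $(-2)^m+(-2)^1+(-2)^0 = 2^m-2+1 = 2^m-1$, as required. For odd $m$ the string $11\,\underbrace{0\cdots0}_{m-2}\,11$ occupies positions $0,1,\dots,m+1$, with $1$'s exactly in positions $m+1$, $m$, $1$, and $0$; since $m+1$ is even and $m$ is odd, its value is $(-2)^{m+1}+(-2)^m+(-2)^1+(-2)^0 = 2^{m+1}-2^m-2+1 = 2^m-1$. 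In both cases the interior zero-run has length exactly $m-2$, which also confirms the total digit counts ($m+1$ and $m+2$, respectively).

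The computation itself is entirely routine; the only place demanding care is the index bookkeeping --- pinning down that the zero-run has length exactly $m-2$, and that the two trailing $1$'s sit in positions $1$ and $0$ while the leading block sits in position $m$ (even case) or in positions $m,m+1$ (odd case). I would sanity-check these against the small values $m=2$ (giving $3=111_{-2}$), $m=3$ (giving $7=11011_{-2}$, which matches the worked example preceding the lemma), $m=4$ (giving $15=10011_{-2}$), and $m=5$ (giving $31=1100011_{-2}$) before committing to the general indices.

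An alternative route is induction on $m$, pushing the conversion identity $2^{2k+1}=(-2)^{2k+2}+(-2)^{2k+1}$ used just above through the relation $2^{m}-1=2(2^{m-1}-1)+1$; but this merely reshuffles the same arithmetic while obscuring the clean closed form, so I would prefer the direct verification.
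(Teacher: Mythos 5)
Your proof is correct, and it takes a different route from the paper's. The paper proceeds constructively: it starts from the binary expansion $2^m-1=2^{m-1}+2^{m-2}+\cdots+2+1$, rewrites each odd-position power via $2^{2k+1}=(-2)^{2k+2}+(-2)^{2k+1}$, and telescopes the resulting sum until only the digits $1\,0\cdots0\,11$ (even $m$) survive; the odd case is then reduced to the even case by one more application of the same identity to the leading term. You instead write down the claimed string, evaluate it positionally as a base $-2$ numeral, check that the value is $2^m-1$, and invoke uniqueness of the negabinary representation with digits in $\{0,1\}$. Your verification is shorter and the index bookkeeping (1's at positions $m,1,0$, resp.\ $m+1,m,1,0$; zeros at positions $2$ through $m-1$) is airtight, and your sanity checks at $m=2,3,4,5$ match the paper's worked example $7=11011_{-2}$. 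The only external input you need is uniqueness of the $\{0,1\}$-digit base $-2$ representation, which is standard and is in any case already implicit throughout the paper (the sequence $g_n$ is only well defined because of it), so this is not a real cost. What the paper's derivation buys is consistency of method: the same ``replace each odd 1 by two 1's'' conversion is reused immediately afterwards in Lemma~2 and in the case analysis of Theorem~3, where one must track how the conversion propagates through a suffix of $n$; your verification style would have to be supplemented there by a separate argument that the higher-order digits are unaffected. For the lemma in isolation, your argument is the cleaner of the two.
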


\begin{proof}
Let $m$ be even. Then we have
$$2^m-1= 2^{m-1}+2^{m-2}+...+2+1=$$
$$(2^m-2^{m-1})+2^{m-2}+(2^{m-2}-2^{m-3})+...+(16-8)+4+(4-2)+1=$$
$$2^m+(-2^{m-1}+2^{m-1})+(-2^{m-3}+2^{m-3})+...+(-8+8)-2+1=100...011_{-2} $$
such that zeros correspond to exponents $m-1,m-2,...,3,2,$ i.e., we have $m-2$
zeros. Now let $m$ be odd. Then $m-1$ is even and, using previous result, we have
$$2^m-1= 2^{m-1}+2^{m-2}+...+2+1=$$
$$2^{m-1}+2^{m-1}+(-2^{m-2}+2^{m-2})+(-2^{m-4}+2^{m-4})+...+(-8+8)-2+1=$$
$$2^{m+1}-2^m-2+1=1100...011_{-2}$$
with also $m-2$ zeros.
\end{proof}
\begin{corollary}\label{cor1}
\begin{equation}\label{3}
g_{2^m-1}=\begin{cases}0,&\text{$m=0$}\\1,&\text{$m=1$}\\1,&\text{$m\geq2$ is even}\\0, &\text{$m\geq3$
is odd.}\end{cases}
\end{equation}
\end{corollary}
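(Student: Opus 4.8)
The plan is to obtain $g_{2^m-1}$ by reading off the parity of the number of $1$'s directly from the explicit negabinary normal forms supplied by Lemma~\ref{L1}, since by definition $g_n$ is precisely the parity of the count of $1$'s in the base $-2$ expansion of $n$. Consequently the corollary reduces to tallying the digits equal to $1$ in the two shapes recorded in \eqref{2}, after first settling the two small values $m=0$ and $m=1$ that lie outside the scope of the lemma.

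First I would dispose of the base cases by direct computation: $2^0-1=0$ has the empty expansion and hence no $1$'s, giving $g_0=0$, while $2^1-1=1=(1)_{-2}$ contributes a single $1$, giving $g_1=1$; these match the first two lines of \eqref{3}.

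For $m\geq2$ I would invoke Lemma~\ref{L1}. In the even case the normal form $100\ldots011_{-2}$ consists of a leading $1$, a run of $m-2$ zeros, and a trailing block $11$, for a total of three $1$'s, whence $g_{2^m-1}\equiv3\equiv1\pmod2$. In the odd case the normal form $1100\ldots011_{-2}$ consists of a leading block $11$, a run of $m-2$ zeros, and a trailing block $11$, for a total of four $1$'s, whence $g_{2^m-1}\equiv4\equiv0\pmod2$. These are exactly the remaining two lines of \eqref{3}.

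Since every case is closed by a one-line digit count, I do not anticipate any genuine obstacle; the only point deserving attention is to verify that the two blocks of $1$'s in each normal form stay separated rather than fusing into one longer block, which is automatic because the $m-2\geq0$ intervening zeros (and the distinctness of the exponents used in the proof of Lemma~\ref{L1}) keep the leading and trailing $1$'s apart. Taking parities then delivers \eqref{3}.
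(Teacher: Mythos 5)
Your proof is correct and is essentially the paper's (implicit) argument: the corollary follows from Lemma~\ref{L1} by counting the three $1$'s in $100\ldots011_{-2}$ and the four $1$'s in $1100\ldots011_{-2}$, plus the direct checks for $m=0,1$. The only cosmetic remark is that for $m=2$ the run of $m-2=0$ zeros is empty so the blocks do fuse into $111_{-2}$, but the count of $1$'s, and hence the parity, is unaffected.
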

\newpage
Let $n$ in the binary ends on even $m\geq2$ 1's. Then $4(n+1)$ ends on
$100...0$ with $m+2$ zeros and thus the end of $4(n+1)$ equals $100...0_{-2}$ with $m+2$ zeros.
On the other hand, $4n+3$ ends on $m+2\geq4$ 1's: $011...1,$ so, by (\ref{2}), the end
of $4n+3$ equals $10...011_{-2}$ with $m$ zeros. Since all the previous binary digits
for $4n+3$ and $4(n+1)$ are the same (indeed,
$4n+4-(4n+3)=1=10...0_2-01...1_2),$
then, continuing conversion from base 2 to base $-2,$ we obtain the equality
 $g_{4n+3}=g_{n+1}.$

4c) Finally, let $n$ be odd and ends on odd number $m$ of 1's. Consider in
more detail the end of $n.$ If $n$ ends on $001..1_2,$ then the proof does not differ
from the previous case, since $4(n+1)=...010...0_2$ with $m+2$ zeros, thus the end of $4(n+1)$ equals $110...0_{-2};$ on the other hand, $4n+3=...001...111_2$ and, by (\ref{2}) the end of
 $4n+3$ equals $110...011$ with $m$ zeros and we again conclude that $g_{4n+3}=
 g_{n+1}.$ \newline
\indent Now let $n$ ends on $01...101...1,$ where the first
run contains $k$ 1's while the second run contains odd $m$ 1's. Then $4n+3$ ends
on $01...101...111,$ while $n+1$ ends on $01...110...0,$ where the run of 1's
contains $k+1$ 1's which is followed by the run of $m$ 0's. Let us pass in two last
ends to base $-2.$ We need a lemma which is proved in the same way as Lemma \ref{L1}.
\begin{lemma}\label{L2}
For odd $m\geq3,$
\begin{equation}\label{4}
2^{m+k+1}-2^m-1=\begin{cases}\enskip 10...010...011_{-2},&\text{$k$ is even}\\110...
010...011_{-2}, &\text{$k$
is odd,}\end{cases}
\end{equation}
where the first (from the left to the right) 0's run has $k$ zeros, while the second 0's run has $m-2$ zeros;
\begin{equation}\label{5}
2(2^{k+1}-1)=\begin{cases}\enskip 10...010_{-2},&\text{$k$ is even}
\\110...010_{-2}, &\text{$k$
is odd,}\end{cases}
\end{equation}
where the first 0's run has $k$ zeros.
\end{lemma}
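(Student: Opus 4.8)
The plan is to exploit the uniqueness of the base-$(-2)$ representation: every nonnegative integer $N$ admits exactly one expansion $N=\sum_i d_i(-2)^i$ with digits $d_i\in\{0,1\}$. Hence, to establish (\ref{4}) and (\ref{5}) it suffices to check two things for each claimed digit string: that all its digits lie in $\{0,1\}$ (immediate from the stated patterns) and that it evaluates to the asserted integer. The evaluation I would carry out by locating the positions of the $1$'s and summing the corresponding signed powers, using the basic rule $(-2)^{2j}=2^{2j}$ and $(-2)^{2j+1}=-2^{2j+1}$; the parity of $k$, together with the fixed parity of $m$, then determines the sign attached to each power. This is precisely the mechanism behind Lemma \ref{L1}, so the argument runs in the same spirit, only with more $1$'s to track.

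First I would dispose of (\ref{5}), since $2(2^{k+1}-1)=2^{k+2}-2$ has few nonzero digits. For even $k$ the string $10\ldots010$ (with $k$ interior zeros) places $1$'s at positions $k+2$ and $1$, giving $(-2)^{k+2}+(-2)^1=2^{k+2}-2$. For odd $k$ the string $110\ldots010$ places $1$'s at positions $k+3,k+2,1$, and the cancellation $(-2)^{k+3}+(-2)^{k+2}=2^{k+3}-2^{k+2}=2^{k+2}$ again yields $2^{k+2}-2$. Thus (\ref{5}) follows in both parities.

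The main work is (\ref{4}), where $2^{m+k+1}-2^m-1$ has two runs of zeros and a leading block whose length ($1$ versus $11$) depends on the parity of $k$. Reading the claimed string from the low end, the trailing $11$ contributes positions $0,1$; the run of $m-2$ zeros pushes the next $1$ to position $m$; the run of $k$ zeros then pushes the leading block up, so for even $k$ the remaining $1$ sits at position $m+k+1$, while for odd $k$ there are two $1$'s, at positions $m+k+1$ and $m+k+2$. Summing the signed powers and using $m$ odd, I would obtain $1-2-2^m+2^{m+k+1}$ in the even case and $1-2-2^m-2^{m+k+1}+2^{m+k+2}$ in the odd case; the latter collapses via $2^{m+k+2}-2^{m+k+1}=2^{m+k+1}$, so both equal $2^{m+k+1}-2^m-1$, as required. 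The one delicate point — the step I expect to be the real obstacle — is bookkeeping the \emph{parity of each exponent} as it is determined jointly by $m$ and $k$, since a single slip flips a sign and destroys the cancellation; I would therefore verify the parity of each of $m$, $m+k+1$, $m+k+2$ explicitly before summing. An alternative, fully parallel to Lemma \ref{L1}, is to start from $2^{m+k+1}-2^m-1=(2^m+2^{m+1}+\cdots+2^{m+k})-1$ and telescope the consecutive powers through the identity $2^{j}=(-2)^{j+1}+(-2)^{j}$ applied at each odd exponent; this reconstructs the same digit string from scratch but demands the identical parity bookkeeping.
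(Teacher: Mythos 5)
Your proposal is correct, and both of your checks of the arithmetic hold up: for (\ref{5}) the $1$'s sit at positions $k+2,1$ (even $k$) or $k+3,k+2,1$ (odd $k$) and evaluate to $2^{k+2}-2$ in both cases; for (\ref{4}) the $1$'s sit at positions $0,1,m,m+k+1$ (plus $m+k+2$ for odd $k$), and since $m$ is odd the signs work out to $2^{m+k+1}-2^m-1$ in both parities. Your primary route, however, is logically different from the paper's. The paper gives no independent proof of Lemma \ref{L2} at all: it says only that the lemma ``is proved in the same way as Lemma \ref{L1},'' i.e.\ by writing the integer as a sum of consecutive powers of $2$ and telescoping each odd-exponent term through $2^{2j+1}=(-2)^{2j+2}+(-2)^{2j+1}$, which \emph{constructs} the negabinary string from the binary one. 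You instead \emph{verify} the claimed string by evaluating $\sum_i d_i(-2)^i$ and appealing to the uniqueness of the base-$(-2)$ representation. What your approach buys is a shorter and less error-prone computation (only the positions of the $1$'s matter, and the sign of each is read off from the parity of its exponent); what it costs is the explicit invocation of uniqueness of the negabinary expansion, a standard fact (see Knuth, cited as \cite{6} in the paper) that the paper uses implicitly throughout but never states. Your secondary, telescoping alternative coincides with the paper's intended argument. Either way the lemma is established.
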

So, by (\ref{4}), for the end of $4n+3$ we have

 $$01...101...111_2=2^{(m+2)+k+1}-2^{m+2}-1=$$ 
 $$\begin{cases}\enskip 10...010...011_{-2},&\text{$k$
is even}\\110...010...011_{-2}...
010...011_{-2}, &\text{$k$
is odd,}\end{cases} $$
where the 0's runs have $k$ and $m$ zeros respectively.\newline
\indent For the corresponding end of $4(n+1)$ having $m+2$ zeros
at the end, by (\ref{5}), we have (since $m+1$ is even):
$$01...110...0_2=2^{m+1}\cdot 1...110_2=$$ $$2^{m+1}\cdot\begin{cases}\enskip 10...010_{-2},&\text{$k$ is even}\\110...010_{-2}, &\text{$k$
is odd}\end{cases}=$$
\newpage
$$\begin{cases}\enskip 10...010...0_{-2},&\text{$k$ is even}\\
110...010...0_{-2}, &\text{$k$ is odd}\end{cases}$$
where the 0's runs have $k$ and $m+2$ zeros respectively.\newline
Since all the previous binary digits for $4n+3$ and $4(n+1)$ are the same
$(4n+4-(4n+3)=1=1...110...0_2-1...101...1_2),$
then, continuing conversion from base 2 to base $-2,$ we obtain the equality $g_{4n+3}=g_{n+1}.$
\end{proof}

\section{Proof of Theorem 4}
\begin{proof}
It is easy to see that Theorem \ref{t4} is equivalent to the formula
\begin{equation}\label{6}
g_{2^k+m}=\begin{cases}1-g_m, &\text{$k \enskip is \enskip even\geq2$ and $2^{k-1}\leq m <2^k$}\\g_m,&\text{$k\enskip is\enskip odd\geq1$ and $0\leq m<2^k-\frac{2}{3}
(2^{k-1}-1)$}\\1-g_m, &\text{$k\enskip is\enskip odd\geq3$ and $2^k-\frac{2}{3}(2^{k-1}-1)\leq m<2^k.$}
\end{cases}
\end{equation}
1) Let $k$ be even$\geq2$ and $2^{k-1}\leq m <2^k.$ We use induction over $k.$
For $k=2,$ $2\leq m<4,$ (\ref{6}) is true: $g_{4+2}=1-g_{2},
\enskip g_{4+3}=1-g_{3}=0;$ also it is easy verify (\ref{6}) for $k=4.$ Suppose that
(\ref{6}) is true for $k-2.$ We write in binary ${1\vee m}$ instead of ${2^k+m}.$
\newline
1a) Let $m=4x.$ By the induction supposition $g_{1\vee x}=1-g_{x}.$ But also,
 by the condition, $g_{1\vee m}=g_{1\vee x}=1-g_{x}$ and $g_m=g_x.$ So
$g_{1\vee m}=1-g_m.$\newline
1b) Let $m=4x+1.$ By the induction supposition $g_{1\vee x}=1-g_{x}.$ But also,
 by the condition (since $g_{4n+1}=1-g_n)$ we have $g_{1\vee m}=1-g_{1\vee x}=
 g_{x}$ and $g_m=1-g_x.$ So $g_{1\vee m}=1-g_m.$\newline
1c) Let $m=4x-1.$ By the induction supposition $g_{1\vee x}=1-g_{x}.$ But also,
 by the condition (since $g_{4n-1}=g_{4(n-1)+3}=g_n)$ $g_{1\vee m}=g_{1\vee x}=
 1-g_{x}$ and $g_m=g_x.$ So $g_{1\vee m}=1-g_m.$ \newline
1d)  Let $m=4x-2.$ By the induction supposition $g_{1\vee x}=1-g_{x}.$ But also,
 by the condition (since $g_{4n-2}=g_{4(n-1)+2}=1-g_n)$ $g_{1\vee m}=1-g_{1\vee x}=
 g_{x}$ and $g_m=1-g_x.$ So $g_{1\vee m}=1-g_m.$ \newline
 The proof in the following two points is the same, except for the bases of induction.
 Therefore in the points 2),3) we give the bases of induction only.\newline
 2) Let $k$ be odd$\geq1$ and $0\leq m<2^k-\frac{2}{3}(2^{k-1}-1).$ For $k=1,$ we have
 $m=0,1$ and (\ref{6}) is true: $g_{2+0}=g_0=0$ and $g_{2+1}=g_1=1;$ for $k=3,$
 we have $m=0,1,2,3,4,5$ and (\ref{6}) is true: $g_{8+0}=g_0=0,$  $g_{8+1}=g_1=1,$
 $g_{8+2}=g_2=0,$ $g_{8+3}=g_3=1,$ $g_{8+4}=g_4=1,$ $g_{8+5}=g_5=0.$\newline
 3) Let $k$ be odd$\geq3$ and $\frac{2}{3}(2^{k-1}-1)\leq m<2^k.$ For $k=3,$ we have
 $m=6,7$ and (\ref{6}) is true: $g_{8+6}=1-g_6=0$ and $g_{8+7}=1-g_7=1.$ for $k=5,$
 we have $m=22,23,...,31$ and (\ref{6}) is true: $g_{32+22}=1-g_{22}=1,$ $g_{32+23}
 =1-g_{23}=0$ ... $g_{32+31}=1-g_{31}=1.$
\end{proof}
\newpage
\section{Proof of Theorem 5}
 \begin{proof} Suppose the sequence $G$ contains a subsequence of the form $XXX$ with the positions
 $$(4k,...,4k+s-1)(4k+s,...,4k+2s-1)(4k+2s,...,4k+3s-1),$$
 where $k\geq0, s\geq1$ are integers.
 We get a contradiction using the formulas of Theorem \ref{t3}. The cases when the first element of $X$ is $4k+1,2,3$ are considered in the same way.\newline
 1a) The case $s=1$ directly contradicts these formulas. Thus $G$ contains no three equal terms. \newline
 1b) $s=2.$ We have $g_{4k}=g_{4k+2}=g_{4k+4},$ then $g_{k}=1-g_{k+1}=g_{k+1},$ a contradiction.
 \newline
 1c) $s=3.$ We have $g_{4k}=g_{4k+3}=g_{4k+6}$ and $g_{4k+1}=g_{4k+4}=g_{4k+7},$ then  $g_{k}=g_{k+1}$ and $1-g_{k}=g_{k+1}=g_{k}.$ A contradiction.\newline
 1d) $s=4.$ We have $g_{4k}=g_{4k+4}=g_{4k+8},$ then $g_k=g_{k+1}=g_{k+2}.$ A contradiction.\newline
 1e) $s=5.$ Since the first $X$ is on positions $\{4k,...,4k+4\}$ and the second $X$ is on positions $\{4k+5,...,4k+9\},$ then $g_{4k+3}=g_{4k+8}$ and $g_{4k+4}=g_{4k+9},$ such that $g_{k+1}=g_{k+2}$ and $g_{k+1}=1-g_{k+2}.$ A contradiction.\newline
 1f) $s=6.$ Since here the first $X$ is on positions $\{4k,...,4k+5\},$ the second $X$ is on positions $\{4k+6,...,4k+11\}$ and the third $X$ is on positions $\{4k+12,...,4k+17\},$ then $g_{4k+1}=1-g_k=g_{4k+7}=g_{k+2},$ $g_{4k+2}=1-g_{k+1}=g_{4k+8}=g_{k+2}.$ Thus $g_k\neq g_{k+2}$ and $g_{k+1}\neq g_{k+2}$ thus $g_k=g_{k+1}.$ Further, $g_{4k+4}=g_{k+1}=g_k=g_{4k+10}=1-g_{k+3}$ and $g_{4k+6}=1-g_{k+2}=g_{4k+12}=g_{k+3}.$ So $g_{k+3}\neq g_{k+1}=g_k$ and $g_{k+3}\neq g_{k+2}.$ It is impossible, since, by $1a),$ $\{g_k, g_{k+1}, g_{k+2}\}$ contains $\{0,1\}.$\newline
 In general, let first $s$ be odd.\newline
 2a) Let $s=4m+1, \enskip m\geq2.$\newline
  Choose in the first $X$ $g_{4k+3}$ and in the second $X$ $g_{4k+3+s}.$ Then we have $g_{4k+3}=g_{4k+3+s}=g_{4k+4m+4}$ or $g_{k+1}=g_{k+m+1}.$ Now in the first $X$ we choose $g_{4k+4}$ and in the second $X$ $g_{4k+4+s}.$ Then we  have $g_{4k+4}=g_{4k+4+4m+1}$ or $g_{k+1}=1-g_{k+m+1}.$ So $g_{k+m+1}= 1-g_{k+m+1},$ a contradiction.\newline
 2b) Let $s=4m+3, \enskip m\geq1.$ \newline
  In the same way, choosing $g_{4k}=g_k$ and $g_{4k+1}=1-g_k$ in the first $X$ and, comparing with $g_{4k+4m+3}=g_{k+m+1},$ $g_{4k+4m+4}=g_{k+m+1}$ in the second $X,$ we obtain $g_{k+m+1}=g_{k}$ and $g_{k+m+1}=1-g_{k}.$ A contradiction.\newline
  Now let $s$ be even. \newline
 3a) Let $s=4m+2, \enskip m\geq2.$ \newline
  We have the following 4 pairs of equations:
  \newpage
  $g_{4k+1} = 1-g_{k},\enskip g_{4k+4m+3} = g_{k+m+1};$\newline
  $g_{4k+2} = 1-g_{k+1},\enskip g_{4k+4m+4)}= g_{k+m+1};$\newline
  $g_{4k+4} = g_{k+1},\enskip g_{4k+4m+6} = 1-g_{k+m+2};$\newline
  $g_{4k+6} = 1-g_{k+2}, g_{4k+4m+8} = g_{k+m+2}.$\newline
From the first two pairs we find $g_{k}=g_{k+1}.$ From the second two pars we find $g_{k+1}=g_{k+2}.$ So $g_{k}=g_{k+1}=g_{k+2},$ a contradiction.\newline
 3b) Let $s=4m,  \enskip m\geq2$ such that $m$ has a form considered in $1)-3a).$\newline
  Taking into account that the cases when the first element of $X$ is $4k+1,2,3$
 are proved in the same way, we can consider all of them as being proven in cases $1)-3a).$ Then this last
  case is proved by an induction. Indeed, from the proof of the cases $1)-3a)$ we conclude
  that if $s>6,$ then already the first $X$ does not coincide with the second $X.$
  Now we have $g_{4k}=g_k=g_{4k+4m}=g_{k+m},$ $g_{4k+3}=g_{k+1}=g_{4k+3+4m}=g_{k+m+1},$
  further,
  $$ g_{4k+4i-1+4m}=g_{k+i+m},\enskip i=1,2,...,m-1.$$
  So, we find
  $$ g_k=g_{k+m}, g_{k+1}=g_{k+m+1},...,g_{k+m-1}=g_{k+2m-1}$$
  which in case $s=m$ contradicts the impossibility of coincidence of the first and the second
  $X$ in the cases of $m$ in $1)-3a).$\newline
 3c) Finally, if $m=4t.$ Suppose, using induction, $s=m$ is impossible. Then
  $$ g_k=g_{k+m}, g_{k+1}=g_{k+m+1},...,g_{k+m-1}=g_{k+2m-1}$$
  is impossible. Then
  $$ g_{4k}=g_{4k+4m}, g_{4k+4}=g_{4k+4+4m},...,g_{4k+4m-4}=g_{4k+4m-4+4m}$$
  is also impossible. This means that $s=4m$ is impossible.
\end{proof}
\section{Proof of Theorem 6}
 \begin{proof} Suppose the sequence $R$ contains a subsequence of the form $XXXXX$ with the positions
 $$(4k,...,4k+s-1)(4k+s,...,4k+2s-1)...(4k+4s,...,4k+5s-1),$$
 where $k\geq0, s\geq1$ are integers.
 Using formulas of Theorem \ref{t1}, we obtain the proof in an analogous way as the proof of Theorem \ref {t5}, distinguishing the following cases.
 1a)  $s=1.$ Since $r_{4k}=r_k$ and $r_{4k+1}=1-r_k$ and also  $r_{4k+4}=r_{k+1}$ and $r_{4k+5}=1-r_{k+1,}$ then $R$ contains no five equal terms. \newline
 1b) $s=2.$ We have $r_{4k}=r_{k}= r_{4k+2}=r_{2k+1}.$ Then $r_{2k+1}=r_{k},$ so $k$ is odd.
  On the other hand,  $r_{4k+1}=1-r_{k}= r_{4k+3}=r_{2k+1}.$ then $r_{2k+1}=1-r_{k},$ so $k$ is even. A contradiction.
 \newline
 1c) $s=3.$ We have $r_{4k}=r_{k}=r_{4k+3}=r_{2k+1}.$ Then $k$ is odd. On the
  \newpage
  other hand, $r_{4k+1}=1-r_{k}=r_{4k+4}=r_{k+1}=r_{4k+7}=r_{2(k+1)+1},$ then $r_{2(k+1)+1}=r_{k+1}.$ So $k+1$ is odd. A contradiction.\newline
 1d) $s=4.$ We have $r_{4k}=r_{4k+4}=r_{4k+8}=r_{4k+12}=r_{4k+16},$ then $r_k=r_{k+1}=r_{k+2}r_{k+3}=r_{k+4}.$ This contradicts 1a).\newline
 1e1) $s=5,$ $k$ is odd. We have  $r_{4k}=r_{k}=r_{4k+5}=1-r_{k+1}$ and $r_{4k+1}=1-r_{k}=r_{4k+6}=r_{2(k+1)+1}=1-r_{k+1}.$ Thus, $r_{k}=1-r_{k+1}$ and $r_{k}=r_{k+1}.$ A contradiction.\newline
 1e2) $s=5, k$ is even. We have $r_{4k+4}=r_{k+1}=r_{4k+9}=1-r_{k+2}$ and
 $r_{4k+5}=1-r_{k+1}=r_{4k+10}=r_{2(k+2)+1}=1-r_{k+2}.$ So $r_{k+1}=1-r_{k+2}$  and $r_{k+1}=r_{k+2}.$  A contradiction.\newline
 1f) $s=6.$ $r_{4k}=r_k=r_{4k+6}=r_{2(k+1)+1}$ and $r_{4k+1}=1-r_{k}=r_{4k+7}=r_{2(k+1)+1}.$
A contradiction.\newline
In general, let first $s$ be odd.\newline
2a) Let $s=4m+1, k$ be odd. We have $r_{4k+1}=1-r_k=r_{4k+4m+2}=r_{2k+2m+1};$
$r_{4k+2}=r_{2k+1}=r_k=r_{4k+4m+3}=r_{2k+2m+1}.$ A contradiction.\newline
2b) Let  $s=4m+1, k$ be even. We have $r_{4k+5}=1-r_{k+1}=r_{4k+4m+6}=r_{2(k+m+1)+1};$
$r_{4k+6}=r_{2(k+1)+1}=r_{k+1}=r_{4k+4m+7}=r_{2(k+2m+1)+1}.$ A contradiction.\newline
2c) Let $s=4m+3, k$ be odd. We have $r_{4k+5}=1-r_{k+1}=r_{4k+4m+8}=r_{k+m+2};$
$r_{4k+6}=r_{2(k+1)+1}=1-r_{k+1}=r_{4k+4m+9}=1-r_{k+m+2}.$ A contradiction.\newline
2d) Let $s=4m+3, k$ be even. We have $r_{4k+1}=1-r_{k}=r_{4k+4m+4}=r_{k+m+1};$
$r_{4k+2}=r_{2k+1}=1-r_{k}=r_{4k+4m+5}=1-r_{k+m+1}.$ A contradiction.\newline
3) Let $s=4m+2.$ We have $r_{4k}=r_k=r_{4k+4m+2}=r_{2(k+m)+1}$ and $r_{4k+1}=1-r_{k}=r_{4k+4m+3}=r_{2(k+m)+1}.$ A contradiction. \newline
4) Let $s=4m.$  This case is considered in the same way as in the proof of Theorem \ref{t5}.
\end{proof}
\section{ Transcendency of numbers $0.G$ and $0.R$}
Together with the Thue-Morse constant $0.T=0.01101001100101..._2$ which
 is given by the concatenated digits of the Thue-Morse sequence A010060
 and interpreted as a binary number, consider the constant $0.G=0.0101101001..._2$ and the constant $0.R=0.01111011100..._2$
 which are given by the concatenated digits of the sequences  $G=A269027$ and $R=A268411$ respectively and interpreted as binary numbers. Mahler \cite{7} proved that $0.T$ is a
 transcendent number. Now we have a possibility to show that $0.G$ and $0.R$ are both transcendental numbers. Allouche (private communication) noted that both sequences $G$ and $R$
 are 2-automatic. Then numbers $0.G$ and $0.R$ also are called 2-automatic. Formally 2-automatic numbers could be 
 \newpage
 rational. But in view of Theorems \ref{t5} and \ref{t6}, the numbers $0.G$ and $0.R$ cannot be eventually periodic and, hence, cannot be rational. But in 2007 Adamczewski and Bugeaud \cite{1} obtained a remarkable result: all irrational automatic numbers are transcendental. So, in particular, numbers $0.G$ and $0.R$ are transcendental.
\newline

 \section{Several interesting author's problems}

 The following four problems are still unsolved (except for $B)$).

A) \cite{8}. For which positive numbers $a,b,c,$ for every nonnegative $n$ there
exists $x\in\{a,b,c\}$ such that $t_{n+x}=t_n?$ \newline
This problem was solved in \cite{8} only partially. For example, for every
$a\geq1,\enskip k\geq0,$ the triple $\{a, a+2^k, a+2^{k+1}\}$ is suitable.
However, there are other infinitely many solutions.\newline

B) Conjecture \cite{8}. Let $u(n)=(-1)^{t_n}|_{n\geq0}$ and $a$ be a positive
 integer. Let $\{l_0<l_1<l_2<...\}, \enskip \{m_0<m_1<m_2<...\}$ be
 integer sequences for which  $u(l_i+a)=-u(l_i)$, $u(m_i+a)=u(m_i).$
Let $\beta_a(n)=u(l_n), \enskip\gamma_a(n)=u(m_n).$
Then the sequences $\beta_a, \gamma_a$ are periodic, of the
 smallest period $2^{v(a)+1},$ where $v(a)$ is such that $2^v(a)||a.$ They satisfy
 $\beta_a=-\gamma_a.$\newline
 This conjecture was proved by Allouche \cite{2}.\newline

 C) (in A268866 \cite{10}). Let $v(n)$ be the maximal number $k$ such that
 $g_r=t_{r+n},\enskip r=0,1...,k-1 ($ if $k=0,$ there is no equality already for $r=0.)$ Let $\{a(n)\}$ be the sequence of records
 in the sequence $\{v(n)\}.$ Conjecture: 1) Let $l(n)$ be the position in
 $\{v(n)\}$ corresponding to $a(n).$ Then $l(n)=(2/3)(4^n-1),$ if $n$ is even, and $l(n)=(2/3)(4^{n-1}-1)+3\cdot4^{n-1},$ if $n$ is odd; 2) $a(n)=2l(n)+2,$ if $n$
 is even, and $a(n)=(7l(n)+12)/11,$ if $n$ is odd.\newline

 D) (A dual problem: in A269341 \cite{10}). Let $w(n)$ be the maximal number $k$ such that
 $g_r=1-t_{r+n},\enskip r=0,1...,k-1.$ Let $\{b(n)\}$ be the sequence of records
 in the sequence $\{w(n)\}.$ Denote by $m(n)$ the position in  $\{w(n)\}$
 corresponding to $b(n).$ Then $m(0)=0, m(1)=1.$ Conjecture: 1) for even
 $n\geq2,\enskip m(n)=(2/3)(4^{n-1}-1);$ for odd $n\geq3,\enskip m(n)=(2/3)(4^{n-2}-1)+3\cdot4^{n-2};$ 2) for even $n\geq2,\enskip b(n)=2m(n)+2;$
  for odd $n\geq3,\enskip b(n)=(7m(n)+12)/11.$\newline
 The author hopes that this paper will help to solve at least the problems $C)$ and $D).$
The paper is connected with the following sequences in \cite{10}: A000695, A010060,
A039724, A069010, A020985, A022155, A203463, A268382, A268383, A268411, A268412,
A268415, A268865, A268866, A268272,
\newpage
 A268273, A268476, A268477, A268483, A269003,
A269027, A269340, A269341, A269458, A269528, A269529.
\newline
\section{Acknowledgement}
The author is grateful to Jean-Paul Allouche for indication of article \cite{1} and a useful discussion of it. He also thanks Peter J. C. Moses whose numerical results were very helpful.


\begin{thebibliography}{8}

\bibitem {1} B. Adamczewski and Yann Bugeaud, On the complexity of algebraic numbers $I.$ Expansions of integer basis, Math. Ann., 165 (2007), 547-565.
\bibitem {2} J.-P. Allouche, Thue, Combinatorics on words, and conjectures inspired
 by the Thue-Morse sequence, J. de Theorie des Nombres de Bordeaux, 27, no. 2 (2015),
 375-388.
\bibitem {3} J.-P. Allouche and J. Shallit, Automatic Sequences, Cambridge Univ. Press, 2003.
\bibitem {4} J.-P. Allouche and J. Shallit, The ubiquitous Prouhet-Thue-Morse sequence,
http://www.lri.fr/~allouche/bibliorecente.html
\bibitem {5} M. Gardner, Knotted Doughnuts and Other Mathematical Entertainments.
Freeman, NY, 1986.
\bibitem {6} D. E. Knuth, The Art of Computer Programming. Addison-Wesley,
 1998, Vol. 2 (3rd ed.), 204-205.
\bibitem {7} K. Mahler, Arithmetische Eigenschaften der L\v{o}sungen einer Klasse von Funktionalgleichungen, Math. Ann. 101 (1929), 342-366; Corrigendum, 103 (1930), 532.
\bibitem {8} V. Shevelev, Equations of the form $t(x + a) = t(x)$ and
$t(x + a) = 1 - t(x)$ for Thue-Morse sequence, Preprint 2009 and 2012, available
electronically at http://arxiv.org/abs/0907.0880.
\bibitem {9} V. Shevelev, \slshape Exponentially S-numbers,\upshape \enskip
arXiv:1510.05914 [math.NT], 2015.
\bibitem {10} N.\enskip J.\enskip A.\enskip Sloane,\enskip\slshape The On-Line
Encyclopedia of Integer Sequences \upshape \enskip http://oeis.org.
\bibitem {11} Eric Weisstein, Negabinary (MathWorld).
\bibitem {12} Wikipedia, Negative base.
\end{thebibliography}
\end{document}